\documentclass{elsarticle}
\usepackage{amsfonts}
\usepackage{amsmath}
\usepackage{amsthm}
\usepackage[all]{xy}
\usepackage{tikz}
\usetikzlibrary{trees}
\usetikzlibrary{arrows,shapes,snakes,automata,backgrounds,petri}
\usepackage{bm}
\usepackage{cleveref}
\usepackage{url}
\DeclareMathOperator{\tr}{tr}

\parindent=0in
\parskip 0.5cm
\textwidth=14.5cm
\textheight=8in
\oddsidemargin=0.4in

\newtheorem{thm}{Theorem}
\newtheorem{cor}{Corollary}

\theoremstyle{remark}
\newtheorem{rem}{Remark}

\begin{document}
\begin{frontmatter}
\title{Adding species to chemical reaction networks: preserving rank preserves nondegenerate behaviours}

\author[1]{Murad Banaji}
\author[2]{Bal\'azs Boros\corref{cor1}}
\author[2]{Josef Hofbauer}

\address[1]{Department of Design Engineering and Mathematics, Middlesex University London}
\address[2]{Department of Mathematics, University of Vienna}

\cortext[cor1]{BB's work was supported by the Austrian Science Fund (FWF), project P32532.}

\begin{abstract}
We show that adding new chemical species into the reactions of a chemical reaction network (CRN) in such a way that the rank of the network remains unchanged preserves its capacity for multiple nondegenerate equilibria and/or periodic orbits. One consequence is that any bounded nondegenerate behaviours which can occur in a CRN can occur in a CRN with bounded stoichiometric classes. The main result adds to a family of theorems which tell us which enlargements of a CRN preserve its capacity for nontrivial dynamical behaviours. It generalises some earlier claims, and complements similar claims involving the addition of reactions into CRNs. The result gives us information on how ignoring some chemical species, as is common in biochemical modelling, might affect the allowed dynamics in differential equation models of CRNs. We demonstrate the scope and limitations of the main theorem via several examples. These illustrate how we can use the main theorem to predict multistationarity and oscillation in CRNs enlarged with additional species; but also how the enlargements can introduce new behaviours such as additional periodic orbits and new bifurcations.
\end{abstract}
\begin{keyword}
Oscillation; multistationarity; chemical reaction networks; bifurcations

\smallskip
\textbf{MSC.} 92E20; 37C25; 34D10
\end{keyword}

\end{frontmatter}

\section{Introduction and outline of the main result}

An important theme in the mathematical study of chemical reaction networks (CRNs) relates to how network structure influences network dynamics. The results in this direction sometimes allow us to infer detailed information on dynamical behaviours of reaction networks using only graph theory and linear algebra. We may, for example, be able to conclude from basic computations that a CRN has very simple behaviour, such as the convergence of all initial conditions to an equilibrium, regardless of parameter values. In the opposite direction, we may be able to state, without numerical simulation, that a CRN admits some interesting behaviour such as stable oscillation, and even know {\em a priori} how to choose parameter values to obtain this behaviour. 

Amongst more complicated behaviours which occur in ordinary differential equation models of CRNs, the most well-studied are multistationarity and oscillation. The history of study of these behaviours in the context of biological modelling is reviewed in \cite{switchclock}. Crucially, multistationarity and oscillation are not just of abstract interest, but may be of functional importance in biological switching and signalling processes \cite{Kholodenko.2000aa,markevich,Ortega.2006ab,Qiao.2007aa,novaktyson,cheong, obatake}. For this reason, results which tell us which structures in a CRN guarantee such behaviours are of considerable interest. 

A family of theorems termed ``inheritance results'' tell us when a CRN is guaranteed to exhibit some dynamical behaviour simply because of the presence of a certain subnetwork. Examples of such results can be found in \cite{joshishiu, feliuwiufInterface2013, banajipanteaMPNE, banajiCRNosci, banajiCRNosci1}. Such conditions which {\em guarantee} nontrivial behaviours based on CRN structure are dual to claims which {\em rule out} nontrivial behaviours in a CRN. There is a large classical and modern literature on conditions which preclude certain dynamical behaviours in CRNs. Examples include \cite{hornjackson, feinberg, angelileenheersontag, craciun, shinarfeinbergconcord1, banajicraciun2, feliuwiufAMC2012, abphopf}. Each new result in either direction helps to narrow the gap between conditions guaranteeing, and conditions ruling out, nontrivial behaviours in CRNs.

The main result of this paper is a new inheritance result, which is both natural and relatively straightforward to prove. The theorem is most simply stated in terms of the {\em rank} of a CRN. Each reaction of a CRN on $n$ chemical species defines a reaction vector, a real (often integer) $n$-vector whose $k$th entry tells us the net production of species $k$ in the reaction. The span of these vectors is the {\em stoichiometric subspace} of the CRN, whose dimension is defined to be the rank of the CRN. In any model of the CRN, numbers or concentrations of the chemical species are confined to affine subspaces parallel to the stoichiometric subspace. The nonnegative portions of these affine subspaces are termed the {\em stoichiometric classes} of the system. 

The rank of a CRN figures in various aspects of the theory. For example, it plays a crucial role in the original results of deficiency theory \cite{feinberg}, and indeed in the definition of the deficiency of a reaction network. More recently, various results have been proved for CRNs of sufficiently low rank, regardless of how many species or reactions are involved. Characterisations of rank-1 CRNs admitting multistationarity are given in \cite{lin2021multistationarity}. In rank-2 CRNs, the Poincar\'e-Bendixson theorem \cite[Chapter 9]{Wiggins} can be used to rule out chaos or guarantee oscillation; and the same can sometimes be extended to rank-3 CRNs, a fact exploited in the analysis in \cite{HalSmithJMC} and \cite{boros:hofbauer:2022b}. In \cite{panteapersistence}, the famous ``global attractor conjecture'' is proved for rank-3 CRNs. 

Stated informally, we have the following complementary inheritance results involving the rank of a CRN. Both are simple applications of regular perturbation theory:
\begin{enumerate}
\item Adding new {\bf reactions} into a CRN without changing its rank preserves its capacity for nontrivial behaviours including nondegenerate multistationarity and oscillation. This was proved in previous work (Theorem~1 in \cite{banajipanteaMPNE} and Theorem~1 in \cite{banajiCRNosci}).
\item Adding new {\bf species} into a CRN without changing its rank preserves its capacity for nontrivial behaviours including nondegenerate multistationarity and oscillation. This is the content of Theorem~\ref{mainthm} here. 
\end{enumerate}

These claims demonstrate that in CRN theory we often have complementary results where we can interchange ``species'' and ``reactions''. Note, however, that adding species into a CRN while preserving its rank can result in some fairly fundamental changes to the CRN. For example, stoichiometric classes of the enlarged CRN may be bounded even if those of the original were unbounded. This is in contrast to adding linearly dependent reactions, a process which leaves stoichiometric classes unchanged. 

Taken together, the above claims imply that building a CRN without altering its rank preserves its capacity for nondegenerate dynamical behaviours. This claim is stated more formally as Corollary~\ref{corrank} later. Note that the enlarged CRNs may, of course, admit more complicated and interesting behaviours than the original, as we shall see by example. 

After some preliminary definitions, we will present the statement and proof of the main theorem and several remarks on its implications and generalisations. This is followed by several examples which demonstrate both the main result and its limitations. 

\section{Preliminaries}

We present some key notions briefly. A much more expansive treatment of the main background can be found in previous work \cite{banajipantea,banajipanteaMPNE,banajiCRNosci}. We consider a CRN to be an ordered set of chemical species and an ordered set of reactions, where the orderings are arbitrary but fixed. Each reaction is an ordered pair of {\em complexes}, namely formal linear combinations of chemical species. The coefficient of each species in a complex is taken to be nonnegative, and often to be an integer, although the latter is not required here. The pair of complexes which define a reaction are termed the ``reactant complex'' and ``product complex'' of the reaction.

{\bf Positive sets in Euclidean space.} The {\em positive orthant} in $\mathbb{R}^n$ is denoted by $\mathbb{R}^n_{+}$ and defined as $\{x \in \mathbb{R}^n \,:\, x_i > 0\,\, \mbox{for}\,\, i = 1, \ldots, n\}$. A set in $\mathbb{R}^n$ is termed {\em positive} if it lies in $\mathbb{R}^n_{+}$. The closure of $\mathbb{R}^n_{+}$ is denoted by $\mathbb{R}^n_{\geq 0}$ and referred to as the {\em nonnegative orthant} in $\mathbb{R}^n$, namely $\mathbb{R}^n_{\geq 0} = \{x \in \mathbb{R}^n \,:\, x_i \geq 0\,\, \mbox{for}\,\, i = 1, \ldots, n\}$.

{\bf Stoichiometric matrix and stoichiometric classes.} Each reaction is associated with a {\em reaction vector} whose $k$th entry is the net production of the $k$th species in the reaction: this is just the stoichiometric coefficient of the $k$th species in the product complex minus its stoichiometric coefficient in the reactant complex. The {\em stoichiometric matrix} of a CRN is the matrix whose $j$th column is the $j$th reaction vector of the CRN. Given a CRN $\mathcal{R}$ on $n$ species with stoichiometric matrix $\Gamma$, the span of the columns of $\Gamma$ (a linear subspace of $\mathbb{R}^n$) is denoted by $\mathrm{im}\,\Gamma$, and is termed the {\em stoichiometric subspace} of $\mathcal{R}$. The rank of $\Gamma$ is termed the {\em rank} of $\mathcal{R}$. The nonnegative parts of cosets of $\mathrm{im}\,\Gamma$, namely sets of the form $(x+\mathrm{im}\,\Gamma) \cap \mathbb{R}^n_{\geq 0}$ ($x \in \mathbb{R}^n_{\geq 0}$), are the {\em stoichiometric classes} of $\mathcal{R}$. The positive parts of stoichiometric classes, namely sets of the form $(x+\mathrm{im}\,\Gamma) \cap \mathbb{R}^n_{+}$ ($x \in \mathbb{R}^n_{+}$), are the {\em positive stoichiometric classes} of $\mathcal{R}$. 

{\bf Ordinary differential equations (ODEs) and rate functions.} Any system of ODEs describing the evolution of a CRN with stoichiometric matrix $\Gamma$ takes the form $\dot x = \Gamma v(x)$. The function $v$ is termed the {\em rate function} of the CRN and its $j$th component tells us how the rate of the $j$th reaction depends on the concentrations of the chemical species. In this paper, $v$ is assumed, as a minimum, to be defined and continously differentiable on $\mathbb{R}^n_{+}$. 

{\bf Nondegenerate and linearly stable limit sets.} Consider a CRN of rank $r$ with stoichiometric matrix $\Gamma$. Let $\mathcal{S}$ be some coset of $\mathrm{im}\,\Gamma$ containing a positive equilibrium (resp., periodic orbit) $\mathcal{O}$. Since the positive part of $\mathcal{S}$ is locally invariant, $\mathcal{O}$ has exactly $r$ eigenvalues (resp., Floquet multipliers) relative to $\mathcal{S}$. If none of these are equal to zero (resp., exactly one of these is equal to one), then we say that $\mathcal{O}$ is {\em nondegenerate}. (Note that this terminology differs from that in \cite{banajiCRNosci}, where an invariant set was referred to as ``nondegenerate'' only if it was hyperbolic relative to its stoichiometric class.) In an abuse of terminology, we refer to $\mathcal{O}$ as {\em hyperbolic} if it is hyperbolic relative to its stoichiometric class, and {\em linearly stable} if it is linearly stable relative to its stoichiometric class.

{\bf Continuation of nondegenerate limit sets.} Suppose $U$ is some open region in $\mathbb{R}^n$, $a > 0$, and $f: U \times (-a,a) \to \mathbb{R}^n$ is $C^1$. If $\dot x = f(x, 0)$ has a nondegenerate equilibrium (resp., periodic orbit) on $U$, then the same is true for $\dot x = f(x, \varepsilon)$ for all $\varepsilon$ sufficiently small. The same conclusion holds if we replace ``nondegenerate'' by ``hyperbolic'' or ``linearly stable''. The results follow from the implicit function theorem and the fact that eigenvalues of a matrix depend continuously on its entries. The claim for equilibria is an immediate consequence of the implicit function theorem (see, for example, Corollary~6.9 in \cite{banajipanteaMPNE}), while the details for periodic orbits are laid out in Section~IV in \cite{Fenichel79}. An immediate consequence is that nondegenerate equilibria and periodic orbits of $\dot x = \Gamma v(x)$ on some positive stoichiometric class survive sufficiently small $C^1$ perturbations to $v(x)$ (here $v$ is assumed to be at least $C^1$). In fact, if we restrict attention to hyperbolic equilibria and periodic orbits, then the results for equilibria and periodic orbits are special cases of more general results on the persistence of normally hyperbolic invariant manifolds in \cite{Fenichel71} and \cite{HPS77}. Thus, the main result here generalises naturally to the case of invariant manifolds which are normally hyperbolic relative to their stoichiometric class.

{\bf Entrywise products and generalised monomials.} The notation $a \circ b$ denotes the {\em entrywise product} of matrices or vectors $a$ and $b$ assumed to have the same dimensions. Given $x=(x_1,\ldots, x_n)^{\mathrm{t}}$ and $a = (a_1,\ldots, a_n)$, $x^a$ is an abbreviation for the {\em generalised monomial} $x_1^{a_1}x_2^{a_2}\cdots x_n^{a_n}$. Let $A_1, \ldots, A_m$ be the rows of an $m \times n$ matrix $A$. Each $x^{A_i}$ is then a generalised monomial, and $x^A$ denotes the vector of these monomials, namely, $(x^{A_1}, x^{A_2}, \ldots, x^{A_m})^{\mathrm{t}}$.

{\bf Kinetics and admitted behaviours.} When we restrict the rate function of a CRN $\mathcal{R}$ to some class of functions $\mathcal{K}$, the pair $(\mathcal{R}, \mathcal{K})$ is referred to as a {\em CRN with kinetics}. We can think of $(\mathcal{R}, \mathcal{K})$ as a set of allowed ODE models of the CRN. We say that $(\mathcal{R}, \mathcal{K})$ {\em admits} some particular dynamical behaviour if this behaviour occurs in some allowed model, i.e., for some choice of rate function from $\mathcal{K}$ and on some stoichiometric class. Otherwise the CRN with kinetics {\em forbids} this behaviour. Different classes of kinetics for CRNs are discussed in detail in \cite{banajiCRNosci}. 

{\bf Power-law kinetics.} Let $\mathsf{X}_1, \ldots, \mathsf{X}_n$ denote the chemical species of a CRN and $x_1, \ldots, x_n$ denote the concentrations of these species. If the $i$th reaction has power-law kinetics, this means that the $i$th rate function takes the form $v_i(x) = \kappa_i x^a$, where $\kappa_i$ is a positive constant (termed the {\em rate constant} of the $i$th reaction), and $a$ is a real (row) vector, termed the vector of exponents for the reaction. If all reactions of a CRN have power-law kinetics, we can stack these row vectors into a matrix $A$, termed the {\em matrix of exponents} of the reaction network, whose $ij$th entry tells us the exponent of species $j$ in the rate function for reaction $i$. In this case, the rate function can be written briefly $\kappa \circ x^A$. If $A$ is fixed in advance, we say that the CRN has {\em fixed power-law kinetics}. 

{\bf Mass action kinetics.} Mass action kinetics is a special case of fixed power-law kinetics where $a_{ij}$, the $ij$th entry in the matrix of exponents, is precisely the stoichiometric coefficient of species $\mathsf{X}_j$ in the reactant complex of reaction $i$. 

{\bf Enlarging CRNs and inheritance.} We are often interested in claiming that whenever a CRN with kinetics admits some behaviour, then so does an enlarged CRN with kinetics in some related class. {\em Inheritance} results relate dynamical behaviours admitted in the enlarged CRNs to those admitted in the original CRNs. 

{\bf Adding linearly dependent species.} In this paper we are interested in an enlargement where $\mathcal{R}'$ is obtained from $\mathcal{R}$ by adding a {\em linearly dependent species} into the reactions of $\mathcal{R}$. In such a modification, the stoichiometric matrix of the CRN is unchanged except for the addition of a new row, and this new row is a linear combination of the existing rows of the stoichiometric matrix. We refer to the process of enlarging a CRN $\mathcal{R}$ by adding in some new, linearly dependent, species as {\em lifting}. The terminology is motivated by the fact that the addition of a new species increases the dimension of the state space by $1$, with the original, lower dimensional, state space naturally embedded in the new state space.

{\bf Derived power-law kinetics.} Suppose that $(\mathcal{R}, \mathcal{K})$ and $(\mathcal{R}', \mathcal{K}')$ are two CRNs, with $\mathcal{R}'$ obtained by adding new species and/or reactions to $\mathcal{R}$, and $\mathcal{K}$, $\mathcal{K}'$ being fixed power-law kinetics with matrices of exponents $A$ and $A'$ respectively. We then say that $\mathcal{K}'$ is {\em derived} from $\mathcal{K}$, if the submatrix of $A'$ corresponding to the original species and reactions of $\mathcal{R}$ is precisely $A$.

{\bf Permanence.} Consider a system of ODEs on some subset of $\mathbb{R}^n$ and suppose that $\mathcal{X}\subseteq \mathbb{R}^n_{\geq 0}$ is forward invariant for the system. The system is {\em permanent} on $\mathcal{X}$ if there exists a forward invariant, compact, positive set $\mathcal{Z} \subseteq \mathcal{X}$ such that the forward trajectory of every positive initial condition in $\mathcal{X}$ eventually enters $\mathcal{Z}$. In the context of CRNs we may think of $\mathcal{X}$ as a stoichiometric class: we may be interested in permanence on some or all stoichiometric classes.

\section{The main result}

Given a parameterised family of compact sets $\mathcal{X}_\varepsilon$ in Euclidean space, with $\varepsilon \in (0, a)$ for some $a > 0$, ``$\mathcal{X}_\varepsilon$ is close to $\mathcal{X}$'' will mean that given any $\delta>0$, there exists $\varepsilon_1 \in (0, a]$ such that for all $\varepsilon \in (0, \varepsilon_1)$ the Hausdorff distance between $\mathcal{X}_\varepsilon$ and $\mathcal{X}$ is less than $\delta$.

\begin{thm}
\label{mainthm}
Let $(\mathcal{R}, \mathcal{K})$ be a CRN with fixed power-law kinetics. Let $\mathcal{R}'$ be derived by adding to $\mathcal{R}$ a new linearly dependent species, and let $\mathcal{K}'$ be any fixed power-law kinetics for $\mathcal{R}'$ derived from $\mathcal{K}$. Suppose that for some choice of kinetics from $\mathcal{K}$, $\mathcal{R}$ has, on some stoichiometric class, at least $0 \leq r_1 < \infty$ positive, nondegenerate (resp., hyperbolic, resp., linearly stable) equilibria and at least $0 \leq r_2 < \infty$ positive, nondegenerate (resp., hyperbolic, resp., linearly stable) periodic orbits. Then, for some choice of kinetics from $\mathcal{K}'$, $\mathcal{R}'$ has, on some stoichiometric class, at least $r_1$ positive, nondegenerate (resp., hyperbolic, resp., linearly stable) equilibria and at least $r_2$ positive, nondegenerate (resp., hyperbolic, resp., linearly stable) periodic orbits.
\end{thm}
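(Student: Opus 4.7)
The plan is to exploit the fact that adding a linearly dependent species automatically creates a new linear first integral, use this to reduce the dynamics of $\mathcal{R}'$ on each stoichiometric class to an $n$-dimensional system on $\mathbb{R}^n_+$, and then choose the new rate constants so that this reduced system is a vanishingly small $C^1$-perturbation of the original one; the continuation result for nondegenerate orbits already recalled in the preliminaries then closes the argument. In detail, let $\Gamma$ be the $n \times m$ stoichiometric matrix of $\mathcal{R}$ and let $A$ be its matrix of exponents. Linear dependence of the new species means there exists $c \in \mathbb{R}^n$ such that $\Gamma' = \begin{pmatrix} \Gamma \\ c^{\mathrm{t}} \Gamma \end{pmatrix}$, so writing the state of $\mathcal{R}'$ as $(x,y) \in \mathbb{R}^n \times \mathbb{R}$ the quantity $y - c^{\mathrm{t}} x$ is conserved. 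Each positive stoichiometric class of $\mathcal{R}'$ is therefore the graph $\{(x, c^{\mathrm{t}} x + C) : x \in \mathcal{S}\}$ of an affine function over a positive stoichiometric class $\mathcal{S}$ of $\mathcal{R}$, for some $C \in \mathbb{R}$. The linear diffeomorphism $x \mapsto (x, c^{\mathrm{t}} x + C)$ conjugates the reduced dynamics on $\mathcal{S}$ to the dynamics of $\mathcal{R}'$ on the corresponding class, so nondegeneracy, hyperbolicity and linear stability (each measured relative to the respective stoichiometric class) transfer automatically between the two systems.

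Let $b \in \mathbb{R}^m$ be the column of $A'$ indexed by the new species, so the $j$th rate of $\mathcal{R}'$ is $v'_j(x,y) = \kappa'_j\, y^{b_j}\, x^{A_j}$, with $A_j$ the $j$th row of $A$ and $\kappa'_j > 0$ still free. On the class parameterised by $C > 0$ the reduced vector field on $\mathcal{S}$ is
\[
\dot x_k \;=\; \sum_{j=1}^m \Gamma_{kj}\, \kappa'_j\, (C + c^{\mathrm{t}} x)^{b_j}\, x^{A_j}, \qquad k = 1, \ldots, n.
\]
I would choose $\kappa'_j := \kappa_j\, C^{-b_j}$, where $\kappa$ is the rate vector witnessing the $r_1$ equilibria and $r_2$ periodic orbits of $\mathcal{R}$. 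The $j$th rate then becomes $\kappa_j (1 + c^{\mathrm{t}} x / C)^{b_j}\, x^{A_j}$, which converges in $C^1$ to $v_j(x) = \kappa_j x^{A_j}$ uniformly on every compact subset of $\mathbb{R}^n_+$ as $C \to \infty$. Hence the reduced system is a $C^1$-small perturbation of $\dot x = \Gamma v(x)$ for $C$ large; by the continuation result it inherits, for $C$ sufficiently large, at least $r_1$ nondegenerate (or hyperbolic, or linearly stable) equilibria and $r_2$ nondegenerate (etc.) periodic orbits on $\mathcal{S}$, and transporting these via $x \mapsto (x, c^{\mathrm{t}} x + C)$ yields the required orbits on a single stoichiometric class of $\mathcal{R}'$.

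The step I expect to require the most care, though it is not a genuine obstacle, is the uniformity in the perturbation argument: the finitely many equilibria and periodic orbits must all be enclosed in one common compact subset of $\mathbb{R}^n_+$, on which both the $C^1$-convergence of the rates and the implicit-function/Floquet machinery that underpins the continuation result apply simultaneously, and one must check that $c^{\mathrm{t}} x + C > 0$ there. Finiteness of $r_1, r_2$ together with positivity of the orbits makes this a matter of first fixing a suitable compact neighbourhood and then letting $C$ be large. A secondary point to verify is that the prescription $\kappa'_j = \kappa_j C^{-b_j}$ remains a valid positive rate constant when $b_j$ is negative (where $\kappa'_j$ diverges as $C \to \infty$): this is harmless because only the product $\kappa'_j y^{b_j}$ enters the dynamics, and it converges uniformly on compact positive sets by the computation above.
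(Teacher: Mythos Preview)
Your proposal is correct and follows essentially the same route as the paper: the paper uses a perturbation parameter $\varepsilon$ with $C = 1/\varepsilon$, sets $\kappa'_j = \varepsilon^{\alpha_j}\kappa_j$ (your $\kappa_j C^{-b_j}$), restricts to the conserved level set $y = \tfrac{1}{\varepsilon} + c^{\mathrm t}x$, and obtains the reduced system $\dot x = \Gamma\bigl(v(x)\circ(1+\varepsilon c^{\mathrm t}x)^{\alpha}\bigr)$, exactly your $\kappa_j(1+c^{\mathrm t}x/C)^{b_j}x^{A_j}$. The only cosmetic difference is that the paper encloses each of the $r_1+r_2$ orbits in its own compact neighbourhood and takes the minimum of the resulting $\varepsilon$-thresholds, whereas you propose one common compact set; both work.
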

\begin{proof}
Let $\mathcal{R}$ include $n$ chemical species and $m$ irreversible reactions with stoichiometric matrix $\Gamma \in \mathbb{R}^{n\times m}$. By hypothesis, there exists $v \in \mathcal{K}$, such that the associated ODE system
\begin{equation}
\label{eq1}
\dot x = \Gamma v(x)\,
\end{equation}
has $r_1$ nondegenerate equilibria and $r_2$ nondegenerate periodic orbits on some positive stoichiometric class $\mathcal{S}_+$. The rate function $v$ is analytic on $\mathbb{R}^n_{+}$, and hence certainly $C^1$ on $\mathbb{R}^n_{+}$ which is, in fact, the only assumption we will need about $v$. Let $\mathcal{S}$ be the affine hull of $\mathcal{S}_+$, i.e., the coset of $\mathrm{im}\,\Gamma$ containing $\mathcal{S}_+$. 

Let $\mathcal{O}$ refer to a nondegenerate positive equilibrium (resp., periodic orbit) on $\mathcal{S}$. Choose $\mathcal{Z} \subseteq \mathcal{S}$ to be compact and positive, with $\mathcal{O} \subseteq \mathcal{Z}^o$, the relative interior of $\mathcal{Z}$ in $\mathcal{S}$. We may assume that similar sets are constructed around each of the $r_1+r_2$ nondegenerate equilibria and periodic orbits of (\ref{eq1}), and that these sets are pairwise disjoint. 

The hypothesis that the added species is linearly dependent implies that there exists $c \in \mathbb{R}^n$ such that the stoichiometric matrix of $\mathcal{R}'$ takes the form
\[
\Gamma' := \left(\begin{array}{c}\Gamma\\c^t\Gamma\end{array}\right)\,.
\]
Define $\varepsilon$ to be a positive parameter to be controlled, and let $\alpha_j \in \mathbb{R}\,\,(j=1,\ldots, m)$ be any real numbers. Denoting the concentration of the new species by $y$, set the rate of the $j$th reaction in $\mathcal{R}'$ to be
\[
v'_j(x,y, \varepsilon) = \varepsilon^{\alpha_j}\,v_j(x)\,y^{\alpha_j}\,.
\]
This choice of reaction rate corresponds to giving the new species exponent $\alpha_j$ in reaction $j$, and multiplying the original rate constant of the $j$th reaction by $\varepsilon^{\alpha_j}$. In brief notation, $\mathcal{R}'$ now has rate function $ v'(x,y,\varepsilon) := \varepsilon^{\alpha} \circ v(x) \circ y^{\alpha}$ where $\alpha := (\alpha_1, \ldots, \alpha_m)^t$. The evolution of $\mathcal{R}'$ is governed by
\begin{equation}
\label{eq2}
\left(\begin{array}{c}\dot x\\\dot y\end{array}\right) = \Gamma'\,v'(x,y,\varepsilon)\,.
\end{equation}

Note that $\mathcal{R}'$ has a new conservation law of the form $-c^tx+y=\mathrm{constant}$. For any fixed $\varepsilon>0$, we will focus our attention on the invariant set $\mathcal{H}_\varepsilon \subseteq \mathbb{R}^{n+1}$ defined by setting this constant to be equal to $\frac{1}{\varepsilon}$, namely, 
\[
\mathcal{H}_\varepsilon = \{(x, y) \in \mathbb{R}^n \times \mathbb{R}\,\colon\, y = \frac{1}{\varepsilon} + c^tx\}\,.
\]
The map
\[
h_\varepsilon\colon x \mapsto \left(\begin{array}{c}x\\\frac{1}{\varepsilon} + c^tx\end{array}\right)
\]
is an affine bijection between $\mathbb{R}^n$ and $\mathcal{H}_\varepsilon$. If $c=0$, set $\varepsilon_1 = 1$, and otherwise set
\[
\varepsilon_1 = \frac{1}{\sup_{x\in \mathcal{Z}}\,|c^tx|}\,.
\]
Then, for $\varepsilon \in (0, \varepsilon_1)$, $h_\varepsilon(\mathcal{Z})$ is a compact, positive subset of $\mathcal{H}_\varepsilon$. The map $h_\varepsilon$ defines local coordinates on $\mathcal{H}_\varepsilon$ which evolve according to
\begin{equation}
\label{eq3}
\dot x = \Gamma\, v'(x, \frac{1}{\varepsilon} + c^tx, \varepsilon) = \Gamma\,(\varepsilon^{\alpha} \circ v(x) \circ (\frac{1}{\varepsilon} + c^t x)^{\alpha}) = \Gamma\,(v(x) \circ (1 + \varepsilon\, c^t x)^{\alpha}) \,.
\end{equation}
Here $x$ refers to the local coordinate on $\mathcal{H}_\varepsilon$, rather than the original coordinate on $\mathbb{R}^n$: this should cause no confusion as we are identifying $\mathcal{H}_\varepsilon$ with $\mathbb{R}^n$ via $h_\varepsilon$. Note that the right hand side of (\ref{eq3}) is well-defined (and $C^1$) provided $x$ is positive, and $\varepsilon < \frac{1}{|c^tx|}$, which certainly holds on $\mathcal{Z}^o \times (-\varepsilon_1, \varepsilon_1)$. 

We wish to restrict our attention to $\mathcal{Z}^o$. We can, if desired, pass to local coordinates on $\mathcal{S}$ in a standard way (see the proofs of several results in \cite{banajiCRNosci}), but here this is unnecessary: we simply bear in mind that we are considering the restriction of (\ref{eq3}) to $\mathcal{Z}^o$, with $\varepsilon \in (-\varepsilon_1, \varepsilon_1)$.

Since the vector field in (\ref{eq3}) is a $C^1$ perturbation of that of (\ref{eq1}), by regular perturbation theory, there exists $\varepsilon_2 \in (0, \varepsilon_1]$ such that for each $\varepsilon \in (0, \varepsilon_2)$, (\ref{eq3}) has an equilibrium (resp., periodic orbit) $\mathcal{O}_\varepsilon$ in $\mathcal{Z}^o$, which is nondegenerate, and close to $\mathcal{O}$. (The details in the harder case where $\mathcal{O}$ is a periodic orbit are in Section~IV in \cite{Fenichel79}, for example.) If $\mathcal{O}$ is hyperbolic relative to $\mathcal{S}$, then we can choose $\varepsilon_2$ to ensure that the linear stability type of $\mathcal{O}_\varepsilon$ relative to $\mathcal{S}$ is the same as that of $\mathcal{O}$. More precisely, (i) if $\mathcal{O}$ is an equilibrium with, relative to $\mathcal{S}$, $k_1$ eigenvalues with positive real part, $k_2$ eigenvalues with negative real part, and no eigenvalues on the imaginary axis, then the same holds for $\mathcal{O}_\varepsilon$; (ii) if $\mathcal{O}$ is a periodic orbit with, relative to $\mathcal{S}$, $k_1$ Floquet multipliers inside the unit circle, $k_2$ outside the unit circle, and precisely one multiplier on the unit circle, then the same holds for $\mathcal{O}_\varepsilon$. As a special case, if $\mathcal{O}$ was linearly stable relative to $\mathcal{S}$, then the same holds for $\mathcal{O}_\varepsilon$.

Since $\mathcal{O}_\varepsilon$ lies in $\mathcal{Z}^o$, it is positive. Let $\mathcal{S}'_\varepsilon = h_\varepsilon (\mathcal{S})$. Explicitly, 
\[
\mathcal{S}'_\varepsilon := \left(\begin{array}{c}x_0\\\frac{1}{\varepsilon}+c^tx_0\end{array}\right) + \mathrm{im}\,\Gamma'\,,
\]
where $x_0$ is any element of $\mathcal{S}$. Note that $\mathcal{S}'_\varepsilon$ has the same dimension as $\mathcal{S}$. Clearly, $\mathcal{O}'_\varepsilon := h_\varepsilon(\mathcal{O}_\varepsilon)$ is an equilibrium (resp., periodic orbit) of (\ref{eq2}), and we have ensured (via the choice of $\varepsilon_1$) that $h_\varepsilon(\mathcal{Z}^o)$, and hence $\mathcal{O}'_\varepsilon \subseteq h_\varepsilon(\mathcal{Z}^o)$, are positive. As $h_\varepsilon$ is an affine bijection between $\mathcal{S}$ and $\mathcal{S}'_\varepsilon$, the choice of $\varepsilon_2$ ensures that the linear stability type of $\mathcal{O}'_\varepsilon$ relative to $\mathcal{S}'_\varepsilon$ is the same as that of $\mathcal{O}_\varepsilon$ relative to $\mathcal{S}$.

We can repeat the same argument in a neighbourhood of each of a finite number of nondegenerate equilibria or periodic orbits of (\ref{eq1}) on $\mathcal{S}_+$. By choosing $\varepsilon_2^*$ to be the minimum of the values of $\varepsilon_2$ associated with each limit set, we can ensure that provided $\varepsilon \in (0, \varepsilon_2^*)$, $\mathcal{R}'$ has at least $r_1$ positive, nondegenerate equilibria and at least $r_2$ positive, nondegenerate periodic orbits on $\mathcal{S}'_\varepsilon$. Moreover, whenever one of the original limit sets was hyperbolic relative to $\mathcal{S}$, we can ensure that the lifted limit set is of the same linear stability type relative to $\mathcal{S}'_\varepsilon$. This completes the proof. \end{proof}

Several remarks are in order.

\begin{rem}[Mass action kinetics]
The result clearly holds if we insist that both $\mathcal{R}$ and $\mathcal{R}'$ have mass action kinetics which is simply a special case of fixed power-law kinetics. In this case, in the proof of Theorem~\ref{mainthm}, $\alpha_j$ is the stoichiometric coefficient of the new species in the reactant complex of the $j$th reaction of $\mathcal{R}'$.
\end{rem}

\begin{rem}[The proof is constructive]
The proof of Theorem~\ref{mainthm}, as with other inheritance results based on perturbation theory, is constructive. It tells us how to set rate constants and how to choose a stoichiometric class in order to find the desired behaviour in the enlarged CRN $\mathcal{R}'$.
\end{rem}

\begin{rem}[The projected dynamics are close to the original]
Consider some ``lifted'' bounded orbit of $\mathcal{R}'$ such as $\mathcal{O}'_\varepsilon$ in the proof of Theorem~\ref{mainthm}. Its projection $\mathcal{O}_\varepsilon$ onto $x$ coordinates, can be made as close as we desire to the original orbit of $\mathcal{R}$ (namely, $\mathcal{O}$) by choosing $\varepsilon$ to be small. But this comes at the cost of large values of the new species concentration $y$ on the lifted orbit, and small rate constants. The next remark indicates the limitations of the lifting process. 
\end{rem}

\begin{rem}[We cannot always control the lifted dynamics over an entire stoichiometric class]
\label{remlift}
The proof of Theorem~\ref{mainthm} tells us the following: given any positive stoichiometric class, say $\mathcal{S}_+$, of the original CRN $\mathcal{R}$, fixing the perturbation parameter $\varepsilon$ at any positive value selects a positive stoichiometric class, say $(\mathcal{S}'_\varepsilon)_+$, of the lifted CRN $\mathcal{R}'$. Assume that rate constants are fixed and let $V$ and $V'_\varepsilon$ refer to the original and lifted vector fields on $\mathcal{S}_+$ and $(\mathcal{S}'_\varepsilon)_+$ respectively. Choosing $\varepsilon$ to be small ensures that the projection of $V'_\varepsilon$ onto $\mathcal{S}_+$ is close to the original vector field $V$ on $\mathcal{S}_+$ on that portion of $\mathcal{S}'_\varepsilon$ where the concentration of the added species (denoted by $y$ in the proof) is large. But, regardless of how small we choose $\varepsilon$ to be, if there are regions of $(\mathcal{S}'_\varepsilon)_+$ where $y$ is small, then in these regions $V'_\varepsilon$ need not be close to $V$. The consequences are illustrated in the example of the Brusselator in Section~\ref{subsec:ex_brusselator}, where the lifting process leads to a loss of permanence on every stoichiometric class. Note, however, that if $\mathcal{S}_+$, the original positive stoichiometric class of $\mathcal{R}$, is itself bounded, then we can control the lifted vector field over the entirety of $(\mathcal{S}'_\varepsilon)_+$.
\end{rem}

\begin{rem}[Normally hyperbolic invariant manifolds persist]
Although Theorem~\ref{mainthm} is phrased in terms of equilibria and periodic orbits, the result admits generalisation. Indeed, with the assumptions of the theorem, if $\mathcal{O}$ is any positive, compact, invariant manifold admitted by $\mathcal{R}$ and normally hyperbolic relative to its stoichiometric class $\mathcal{S}$, then it survives $C^1$ perturbations \cite{Fenichel71,HPS77}, and hence is admitted by $\mathcal{R}'$. If, for example, $\mathcal{R}$ admits a $k$-dimensional torus on some positive stoichiometric class, and the torus is normally hyperbolic relative to this class, then the same holds for $\mathcal{R}'$. 
\end{rem}

\begin{rem}[Bifurcations persist]
\label{rembif}
Suppose $\mathcal{R}$ admits, on some positive stoichiometric class, a nondegenerate local bifurcation of an equilibrium or periodic orbit, unfolded nondegenerately by the rate constants. Then, for sufficiently small, fixed, $\varepsilon > 0$, $\mathcal{R}'$ admits the same nondegenerate local bifurcation on some positive stoichiometric class as we vary the same combination of rate constants (note that rate constants of $\mathcal{R}$ and $\mathcal{R}'$ are in natural one-to-one correspondence). Essentially, the nondegeneracy and transversality conditions associated with the bifurcation, allow us to continue the bifurcation as we vary $\varepsilon$. Moreover, these conditions continue to hold for sufficiently small $\varepsilon$. For a concrete example demonstrating the pesistence of a bifurcation, see Section~\ref{subsec:ex_lva}.
\end{rem}

\begin{rem}[A generalisation of previous claims]
For CRNs with power-law kinetics, Theorem~\ref{mainthm} generalises the claims in Theorems~3 in \cite{banajipanteaMPNE} and \cite{banajiCRNosci}, which treat the very special case where the added species figures only trivially in reactions, i.e., adds only a row of zeros to the stoichiometric matrix of the network. Note that, in that case, the new stoichiometric classes were bounded if and only if the original stoichiometric classes were bounded. 
\end{rem}

\begin{rem}[Generalisations to other classes of kinetics]
Phrasing the result in terms of power-law kinetics simplifies the proof, but is not key to it. The broad template of the proof can be applied to CRNs with other classes of kinetics. 
\end{rem}

\begin{rem}[CRNs with bounded stoichiometric classes do not have greatly restricted dynamics]
One immediate consequence of Theorem~\ref{mainthm} is that insisting a CRN has bounded stoichiometric classes, does not greatly restrict its behaviour. If a given CRN with unbounded stoichiometric classes admits some finite set of bounded nondegenerate limit sets on one of its stoichiometric classes, then we can always construct, by adding in a dependent species, a CRN with bounded stoichiometric classes which admits the same bounded nondegenerate limit sets on one of its stoichiometric classes. We see several instances of this in the examples presented in Section~\ref{secexamples}.
\end{rem}

The next corollary tells us that it may be helpful to examine full-rank subnetworks of a CRN: finding nontrivial behaviours in these subnetworks is sufficient to ensure that they occur in the original CRN. We recall the definition of an induced subnetwork of a CRN from \cite{banajiCRNosci}: this is a CRN obtained by removing some reactions from a CRN, and/or some species from all the reactions in which they figure. In terms of the Petri-net graph of the CRN this corresponds to removing some vertices from the graph along with all their incident arcs. 

\begin{cor}
\label{corrank}
Let $\mathcal{R}$ be a CRN of rank $r$, and let $\mathcal{R}_0$ be any rank-$r$ induced subnetwork of $\mathcal{R}$. If, for some fixed power-law kinetics, $\mathcal{R}_0$ admits $k_1$ positive nondegenerate (resp., hyperbolic, resp., linearly stable) equilibria and $k_2$ positive nondegenerate (resp., hyperbolic, resp., linearly stable) periodic orbits, then the same holds for $\mathcal{R}$ with any derived power-law kinetics.
\end{cor}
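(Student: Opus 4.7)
The plan is to realize the passage from $\mathcal{R}_0$ to $\mathcal{R}$ as a composition of two inheritance steps: first add back the removed species one at a time, applying Theorem~\ref{mainthm} iteratively, and then add back the removed reactions one at a time, applying the analogous inheritance result for adding linearly dependent reactions (Theorem~1 in \cite{banajipanteaMPNE} and Theorem~1 in \cite{banajiCRNosci}). At every intermediate stage the rank will be exactly $r$, so the added row or column will be a linear combination of the existing ones and the corresponding theorem will apply.

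Let $\Gamma$ be the stoichiometric matrix of $\mathcal{R}$; by the definition of induced subnetwork, $\Gamma_0$ is obtained from $\Gamma$ by deleting a set of rows (the removed species) and a set of columns (the removed reactions). Define $\mathcal{R}_1$ to be the CRN with all species of $\mathcal{R}$ but only the reactions of $\mathcal{R}_0$, and let $\Gamma_1$ be its stoichiometric matrix, obtained from $\Gamma$ by deleting only the columns corresponding to the removed reactions. Since $\Gamma_0$ is a row-submatrix of $\Gamma_1$ and $\Gamma_1$ is a column-submatrix of $\Gamma$, monotonicity of rank under row- and column-deletion gives
\[
r = \mathrm{rank}(\Gamma_0) \leq \mathrm{rank}(\Gamma_1) \leq \mathrm{rank}(\Gamma) = r,
\]
so $\mathrm{rank}(\Gamma_1) = r$. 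The same sandwich shows that every matrix produced by adding back one species to any CRN intermediate between $\mathcal{R}_0$ and $\mathcal{R}_1$, or by adding back one reaction to any CRN intermediate between $\mathcal{R}_1$ and $\mathcal{R}$, again has rank $r$.

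In the first stage each added species thus contributes a row that is a linear combination of the existing rows, so it is linearly dependent in the sense of Theorem~\ref{mainthm}; I pick its exponent in each existing reaction to agree with the prescribed derived power-law kinetics for $\mathcal{R}$, and iterating Theorem~\ref{mainthm} yields a fixed power-law kinetics on $\mathcal{R}_1$ admitting at least $k_1$ positive nondegenerate (resp., hyperbolic, resp., linearly stable) equilibria and $k_2$ such periodic orbits on some positive stoichiometric class. In the second stage each added reaction contributes a column in the span of the existing columns; I pick the exponents of the existing species in each new reaction to match those prescribed by the derived kinetics for $\mathcal{R}$, and iterating the reaction-addition inheritance result yields the claim. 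The only thing that genuinely needs checking is the rank bookkeeping above; beyond that, the corollary is a straightforward concatenation of two existing inheritance theorems, and the hyperbolic and linearly stable refinements pass through because both theorems preserve those stability types.
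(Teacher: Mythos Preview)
Your proposal is correct and follows essentially the same route as the paper: build $\mathcal{R}$ from $\mathcal{R}_0$ by successively adding linearly dependent species (invoking Theorem~\ref{mainthm}) and linearly dependent reactions (invoking Theorems~1 of \cite{banajipanteaMPNE} and \cite{banajiCRNosci}). The paper's proof is only a couple of sentences and leaves the rank bookkeeping implicit; you have spelled out the sandwich argument $r=\mathrm{rank}(\Gamma_0)\leq\mathrm{rank}(\Gamma_1)\leq\mathrm{rank}(\Gamma)=r$ and the choice of exponents to match the prescribed derived kinetics, which is a welcome clarification but not a different idea.
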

\begin{proof}
Clearly $\mathcal{R}$ can be built from $\mathcal{R}_0$ by adding linearly dependent reactions and linearly dependent species to $\mathcal{R}_0$. The result is thus an immediate consequence of Theorem~\ref{mainthm} above which deals with the case of adding linearly dependent species, and Theorems~1 in \cite{banajipanteaMPNE}~and~\cite{banajiCRNosci} which deal with the case of adding linearly dependent reactions. (Although Theorems~1 in \cite{banajipanteaMPNE}~and~\cite{banajiCRNosci} are stated in more restricted terms, the generalisations required are immediate.)
\end{proof}

\section{Examples}

\label{secexamples}

First, we introduce some terminology relevant to the examples below.

{\bf Homogeneous CRNs.} The \emph{molecularity} of a complex in a CRN is the sum of its stoichiometric coefficients. We call a CRN \emph{homogeneous} if, for every reaction, the molecularities of the reactant complex and the product complex are equal. Clearly, this condition is equivalent to $(1,1,\ldots,1)^\mathrm{t}$ being an element of the kernel of $\Gamma^\mathrm{t}$. In particular, the stoichiometric classes of a homogeneous CRN are bounded: for the ODE associated with a homogeneous CRN, regardless of the precise nature of the kinetics, the quantity $x_1 + x_2 + \cdots + x_n$ is conserved. A partial converse is also true: it is easily shown that if a CRN is endowed with any fixed power-law kinetics, and $x_1 + x_2 + \cdots + x_n$ is constant along trajectories for some open set of rate constants, then the CRN is homogeneous.

{\bf Homogenisation of CRNs.} Starting with an arbitrary CRN $\mathcal{R}$, one can make it homogeneous by adding a new species with appropriate stoichiometric coefficients to the reactant or product complex of each reaction \cite[Exercise 4 on page 29]{erdi:toth:1989}. This operation preserves the rank of $\mathcal{R}$, and the homogenisation can be carried out in multiple ways. By Theorem~\ref{mainthm}, if $\mathcal{R}$ has mass action kinetics, and nondegenerate multistationarity (resp., oscillation) occurs in $\mathcal{R}$, then it also occurs in the homogenised CRN.

In all of the examples below, we start with a network that is not homogeneous, and then homogenise it. The examples in \Cref{subsec:ex_scalar,subsec:ex_lva} illustrate our main result, while those in \Cref{subsec:ex_lotka,subsec:ex_brusselator} demonstrate its limitations. The rank of each of these networks is one or two. For the homogenisation of some rank-three mass-action systems, consult \cite{boros:hofbauer:2022b}. For applications of Theorem~\ref{mainthm} where the enlarged network is not homogeneous, but nevertheless has bounded stoichiometric classes, see \cite{boros:hofbauer:2022a}.

Mathematica code for the analysis of all the examples is available on GitHub \cite{balazsgithub}.

\subsection{Schl\"ogl model: a single-species CRN with multiple equilibria}
\label{subsec:ex_scalar}

We use the reversible version of the Schl\"ogl model \cite{schlogl:1971,schlogl:1972} to demonstrate the use of Theorem~\ref{mainthm} for guaranteeing the existence of multiple nondegenerate equilibria in an enlarged reaction network. It also provides some insight into the proof of Theorem~\ref{mainthm}.

Consider the following single-species mass action system and its associated differential equation:
\begin{center}
\begin{tikzpicture}[scale=1.5]
\node (P0) at (0,0) {$\mathsf{0}$};
\node (P1) at (1,0) {$\mathsf{X}$};
\node (P2) at (2,0) {$2\mathsf{X}$};
\node (P3) at (3,0) {$3\mathsf{X}$};
\draw[arrows={-stealth},transform canvas={yshift=2pt}] (P0) to node[above] {$6$} (P1);
\draw[arrows={-stealth},transform canvas={yshift=-2pt}] (P1) to node[below] {$11$} (P0);
\draw[arrows={-stealth},transform canvas={yshift=2pt}] (P2) to node[above] {$6$} (P3);
\draw[arrows={-stealth},transform canvas={yshift=-2pt}] (P3) to node[below] {$1$} (P2);
\node at (5.5,0) {$\dot{x} = -x^3+6x^2-11x+6.$};
\end{tikzpicture}
\end{center}
It has $3$ positive nondegenerate equilibria: at $x=1$, $x=2$, and $x=3$. The first and third are linearly stable, while the one at $x=2$ is linearly unstable. 

We now homogenise the network. Note that the simplest choice of homogenisation gives the enlarged network $\mathsf{Y}\rightleftharpoons\mathsf{X},\,\, 2\mathsf{X}+\mathsf{Y} \rightleftharpoons 3\mathsf{X}$. However, we choose the slightly more complicated homogenised network $2\mathsf{Y} \rightleftharpoons \mathsf{X}+\mathsf{Y}, \,\, 2\mathsf{X}+\mathsf{Y} \rightleftharpoons 3\mathsf{X}$ in order to demonstrate two points: that there are many ways to homogenise a CRN; and that the modified rate constants given in the proof of Theorem~\ref{mainthm} can have nonlinear dependence on a perturbation parameter $\varepsilon$ as seen below.

Theorem~\ref{mainthm} now tells us that this network must admit three nondegenerate equilibria on some stoichiometric class, two linearly stable and one linearly unstable. To see why, we follow the proof of Theorem~\ref{mainthm} and obtain the following mass action system and its associated differential equation, dependent on a new parameter $\varepsilon$:
\begin{center}
\begin{tikzpicture}[scale=1.5]
\node (P0) at (-1,0) {$2\mathsf{Y}$};
\node (P1) at (0.25,0) {$\mathsf{X}+\mathsf{Y}$};
\node (P2) at (1.5,0) {$2\mathsf{X}+\mathsf{Y}$};
\node (P3) at (2.75,0) {$3\mathsf{X}$};
\draw[arrows={-stealth},transform canvas={yshift=2pt}] (P0) to node[above] {$6\varepsilon^2$} (P1);
\draw[arrows={-stealth},transform canvas={yshift=-2pt}] (P1) to node[below] {$11\varepsilon$} (P0);
\draw[arrows={-stealth},transform canvas={yshift=2pt}] (P2) to node[above] {$6\varepsilon$} (P3);
\draw[arrows={-stealth},transform canvas={yshift=-2pt}] (P3) to node[below] {$1$} (P2);
\node at (5.5,0) {$\begin{aligned}
\dot{x} &= -x^3+6\varepsilon x^2 y-11\varepsilon x y+6\varepsilon^2 y^2, \\
\dot{y} &= +x^3-6\varepsilon x^2 y+11\varepsilon x y-6\varepsilon^2 y^2.
\end{aligned}$};
\end{tikzpicture}
\end{center}

In the homogenised system, the stoichiometric subspace remains $1$-dimensional, and the quantity $x+y$ is conserved. We now restrict attention to the stoichiometric class defined by $x+y=\frac{1}{\varepsilon}$ and replace $y$ by $\frac{1}{\varepsilon}- x$. The dynamics of $x$ for $0 < x < \frac{1}{\varepsilon}$ is then given by
\begin{align*}
\dot x = -x^3 + 6x^2(1-\varepsilon x) - 11x(1-\varepsilon x) + 6(1-\varepsilon x)^2\,.
\end{align*}
On any compact subinterval of $(0,\infty)$, the vector field $-x^3+6x^2(1-\varepsilon x)-11x(1-\varepsilon x)+6(1-\varepsilon x)^2$ converges uniformly to $-x^3+6x^2-11x+6$ as $\varepsilon\to 0$. It is not hard to see that for all sufficiently small $\varepsilon$ the lifted system with the scaled rate constants has $3$ positive equilibria in the stoichiometric class $x+y=\frac{1}{\varepsilon}$, two of which are stable, while one is unstable. Note, however, that for any fixed rate constants, each stoichiometric class defined by $x+y=c$ with $c>0$ being large enough, has a unique positive equilibrium. This illustrates that in order to obtain the desired behaviour we must {\em simultaneously} choose the rate constants and a stoichiometric class of the lifted system.


\subsection{Lotka-Volterra-Autocatalator}
\label{subsec:ex_lva}

Consider the Lotka-Volterra-Autocatalator \cite[(8)]{farkas:noszticzius:1985} with one reversible reaction \cite[Section 3]{simon:1992} (henceforth abbreviated to LVA), and its associated mass action differential equation:
\begin{center}
\begin{tikzpicture}[scale=1.5]
\node (P1a) at (0,0)    {$2\mathsf{X}$};
\node (P1b) at (1,0)    {$3\mathsf{X}$};
\node (P2a) at (0,-2/3) {$\mathsf{X}+\mathsf{Y}$};
\node (P2b) at (1,-2/3) {$2\mathsf{Y}$};
\node (P3a) at (0,-4/3) {$\mathsf{Y}$};
\node (P3b) at (1,-4/3) {$\mathsf{0}$};
\draw[arrows={-stealth},transform canvas={yshift=2pt}] (P1a) to node[above] {$\kappa_1$} (P1b);
\draw[arrows={-stealth},transform canvas={yshift=-2pt}] (P1b) to node[below] {$\kappa_2$} (P1a);
\draw[arrows={-stealth}] (P2a) to node[above] {$\kappa_3$} (P2b);
\draw[arrows={-stealth}] (P3a) to node[above] {$\kappa_4$} (P3b);
\node at (3,-2/3) {$\begin{aligned}
\dot{x} &= \kappa_1 x^2-\kappa_2 x^3 - \kappa_3 x y, \\
\dot{y} &= \kappa_3 x y - \kappa_4 y.
\end{aligned}$};
\end{tikzpicture}
\end{center}
The system has exactly one positive equilibrium if and only if $\kappa_1 \kappa_3 > \kappa_2 \kappa_4$ (otherwise there is no positive equilibrium). We can confirm that at $\kappa_1 \kappa_3 = 2\kappa_2 \kappa_4$ the positive equilibrium undergoes a supercritical Andronov-Hopf bifurcation, and thus, for $\kappa_1 \kappa_3 - 2\kappa_2 \kappa_4$ positive and sufficiently small, the system has a stable periodic orbit.

Let us now homogenise the LVA. The resulting network and its associated mass action differential equation take the form
\begin{center}
\begin{tikzpicture}[scale=1.5]
\node (P1a) at (0,0)    {$2\mathsf{X}+\mathsf{Z}$};
\node (P1b) at (1,0)    {$3\mathsf{X}$};
\node (P2a) at (0,-2/3) {$\mathsf{X}+\mathsf{Y}$};
\node (P2b) at (1,-2/3) {$2\mathsf{Y}$};
\node (P3a) at (0,-4/3) {$\mathsf{Y}$};
\node (P3b) at (1,-4/3) {$\mathsf{Z}$};
\draw[arrows={-stealth},transform canvas={yshift=2pt}] (P1a) to node[above] {$\kappa_1$} (P1b);
\draw[arrows={-stealth},transform canvas={yshift=-2pt}] (P1b) to node[below] {$\kappa_2$} (P1a);
\draw[arrows={-stealth}] (P2a) to node[above] {$\kappa_3$} (P2b);
\draw[arrows={-stealth}] (P3a) to node[above] {$\kappa_4$} (P3b);
\node at (3,-2/3) {$\label{pp}\begin{aligned}
\dot{x} &= \kappa_1 x^2z-\kappa_2 x^3 - \kappa_3 x y, \\
\dot{y} &= \kappa_3 x y - \kappa_4 y, \\
\dot{z} &= \kappa_2 x^3 - \kappa_1 x^2 z + \kappa_4 y.
\end{aligned}$};
\end{tikzpicture}
\end{center}
Theorem~\ref{mainthm} tells us that the homogenised LVA must admit a linearly stable periodic orbit on some stoichiometric class. Let us use these systems to demonstrate the arguments in the proof of Theorem~\ref{mainthm} and some subsequent remarks. Consider again the ODE systems associated with the LVA,
\begin{equation}
\label{eqLVA}
\begin{aligned}
\dot{x} &= \kappa_1 x^2-\kappa_2 x^3 - \kappa_3 x y, \\
\dot{y} &= \kappa_3 x y - \kappa_4 y,
\end{aligned}
\end{equation}
and with the homogenised LVA, where we have now added a prime $'$ to the rate constants to distinguish them from those of the original LVA,
\begin{equation}
\label{eqLVAhom}
\begin{aligned}
\dot{x} &= \kappa'_1 x^2z-\kappa'_2 x^3 - \kappa'_3 x y, \\
\dot{y} &= \kappa'_3 x y - \kappa'_4 y, \\
\dot{z} &= \kappa'_2 x^3 - \kappa'_1 x^2 z + \kappa'_4 y.
\end{aligned}
\end{equation}
If we set $(\kappa'_1, \kappa'_2, \kappa'_3, \kappa'_4) = (\varepsilon \kappa_1, \kappa_2, \kappa_3, \kappa_4)$, and restrict attention to the stoichiometric class on which $x+y+z=\frac{1}{\varepsilon}$, then, using this equation to eliminate $z$, we find that $x$ and $y$ in the homogenised LVA evolve according to
\begin{equation}
\label{eqLVAred}
\begin{aligned}
\dot{x} &= \kappa_1 x^2-\kappa_2 x^3 - \kappa_3 x y -\varepsilon\kappa_1(x^3+x^2y), \\
\dot{y} &= \kappa_3 x y - \kappa_4 y\,.
\end{aligned}
\end{equation}
We see immediately that the vector field in (\ref{eqLVAred}) converges on compact sets to that in (\ref{eqLVA}) as $\varepsilon \to 0$.

In  particular, suppose we fix some values of $\kappa_1, \kappa_2, \kappa_3, \kappa_4$ such that $\kappa_1 \kappa_3 - 2\kappa_2 \kappa_4$ is positive and sufficiently small to ensure that (\ref{eqLVA}) has a linearly stable, positive periodic orbit $\mathcal{O}$. Then, for sufficiently small $\varepsilon > 0$, (\ref{eqLVAred}) has a linearly stable, positive, periodic orbit $\mathcal{O}_\varepsilon$. Moreover, provided $\varepsilon$ is sufficiently small, $z=\frac{1}{\varepsilon}-x-y$ remains positive as $x$ and $y$ vary along this periodic orbit, and so the lifted system (\ref{eqLVAhom}) has a positive periodic orbit $\mathcal{O}'_\varepsilon$ on the stoichiometric class defined by $x+y+z = \frac{1}{\varepsilon}$. Clearly, since $\mathcal{O}_\varepsilon$ is linearly stable for (\ref{eqLVAred}), $\mathcal{O}'_\varepsilon$ is linearly stable relative to this class.

In fact, we can go further and argue that the bifurcation which gives rise to stable periodic orbits in (\ref{eqLVA}) itself survives lifting (see Remark~\ref{rembif}). Suppose we fix some path in parameter space, $P\colon \mathbb{R} \to \mathbb{R}^4_+$, $s \mapsto (\kappa_1(s), \kappa_2(s), \kappa_3(s), \kappa_4(s))$ such that $P(0)$ lies on the surface in $\mathbb{R}^4_+$ defined by $\kappa_1 \kappa_3 - 2\kappa_2 \kappa_4=0$, and $P$ is transverse to this surface at $0$. This ensures that (\ref{eqLVA}) undergoes a nondegenerate supercritical Andronov-Hopf bifurcation at $s=0$. Then we can be sure that for any sufficiently small $\varepsilon > 0$, a nondegenerate supercritical Andronov-Hopf bifurcation occurs in (\ref{eqLVAhom}) on the stoichiometric class defined by $x+y+z=\frac{1}{\varepsilon}$ as we traverse the corresponding path $P'$ in $(\kappa'_1, \kappa'_2, \kappa'_3, \kappa'_4)$ space obtained by noting that $(\kappa'_1, \kappa'_2, \kappa'_3, \kappa'_4) = (\varepsilon \kappa_1, \kappa_2, \kappa_3, \kappa_4)$. This bifurcation occurs for some value of $s$ close to $0$.


\subsection{Lotka reactions}
\label{subsec:ex_lotka}

We now provide an example where the assumptions of Theorem~\ref{mainthm} are violated, and periodic solutions are not preserved by lifting.

Consider the Lotka reactions and the associated mass action differential equation:
\begin{center}
\begin{tikzpicture}[scale=1.5]
\node (P1a) at (0,0)    {$\mathsf{X}$};
\node (P1b) at (1,0)    {$2\mathsf{X}$};
\node (P2a) at (0,-2/3) {$\mathsf{X}+\mathsf{Y}$};
\node (P2b) at (1,-2/3) {$2\mathsf{Y}$};
\node (P3a) at (0,-4/3) {$\mathsf{Y}$};
\node (P3b) at (1,-4/3) {$\mathsf{0}$};
\draw[arrows={-stealth}] (P1a) to node[above] {$\kappa_1$} (P1b);
\draw[arrows={-stealth}] (P2a) to node[above] {$\kappa_2$} (P2b);
\draw[arrows={-stealth}] (P3a) to node[above] {$\kappa_3$} (P3b);
\node at (3,-2/3) {$\begin{aligned}
\dot{x} &= \kappa_1 x - \kappa_2 x y, \\
\dot{y} &= \kappa_2 x y - \kappa_3 y.
\end{aligned}$};
\end{tikzpicture}
\end{center}
In this case, the unique positive equilibrium $\left(\frac{\kappa_3}{\kappa_2},\frac{\kappa_1}{\kappa_2}\right)$ is surrounded by a continuum of periodic orbits; these are level sets of the nonlinear first integral $x^{-\kappa_3}y^{-\kappa_1}e^{\kappa_2(x+y)}$. Since these periodic orbits are degenerate, Theorem~\ref{mainthm} does not apply. 

Indeed, adding a new species, $\mathsf{Z}$, to some of the reactions while preserving the rank of the network can lead to the destruction of all of the periodic orbits. Consider the following network and its associated mass action differential equation:
\begin{center}
\begin{tikzpicture}[scale=1.5]
\node (P1a) at (0,0)    {$\mathsf{X}+\mathsf{Z}$};
\node (P1b) at (1,0)    {$2\mathsf{X}$};
\node (P2a) at (0,-2/3) {$\mathsf{X}+\mathsf{Y}$};
\node (P2b) at (1,-2/3) {$2\mathsf{Y}$};
\node (P3a) at (0,-4/3) {$\mathsf{Y}$};
\node (P3b) at (1,-4/3) {$\mathsf{Z}$};
\draw[arrows={-stealth}] (P1a) to node[above] {$\kappa_1$} (P1b);
\draw[arrows={-stealth}] (P2a) to node[above] {$\kappa_2$} (P2b);
\draw[arrows={-stealth}] (P3a) to node[above] {$\kappa_3$} (P3b);
\node at (3,-2/3) {$\begin{aligned}
\dot{x} &= \kappa_1 x z - \kappa_2 x y, \\
\dot{y} &= \kappa_2 x y - \kappa_3 y,   \\
\dot{z} &= -\kappa_1 x z + \kappa_3 y.
\end{aligned}$};
\end{tikzpicture}
\end{center}
The set of positive equilibria is $\left\{\left(\frac{\kappa_3}{\kappa_2},\frac{\kappa_1}{\kappa_2}t,t\right)\colon t>0\right\}$. Thus, the stoichiometric classes $x+y+z=c$ with $c\leq \frac{\kappa_3}{\kappa_2}$ have no positive equilibria, while those with $c> \frac{\kappa_3}{\kappa_2}$ have a unique positive equilibrium. Note that the divergence of the vector field after division by $xyz$ equals $-\frac{\kappa_3}{xz^2}$. Since this quantity is negative on $\mathbb{R}^3_+$, there is no periodic orbit that lies entirely in the positive orthant \cite[Satz 1]{schneider:1969} (see also Remark (v) following the proof of Theorem 2.3 in \cite{li_1996}). In fact, every positive equilibrium is globally asymptotically stable within its positive stoichiometric class: this follows via the Poincar\'e-Bendixson Theorem, since it can be shown that on any stoichiometric class which includes a positive equilibrium no positive initial condition has omega limit set intersecting the boundary of the nonnegative orthant.


\subsection{Brusselator}
\label{subsec:ex_brusselator}

Our final example demonstrates that while the lifted CRN must admit the nondegenerate behaviours of the original CRN, it may also allow other behaviours not seen in the original CRN, such as multiple periodic orbits and homoclinic orbits. Looked at from another angle, omitting a single species from a CRN, even without changing its rank, can result in the loss of many different nontrivial behaviours.

Consider the Brusselator and its associated mass action differential equation:
\begin{center}
\begin{tikzpicture}[scale=1.5]
\node (P1) at (0,0)    {$\mathsf{0}$};
\node (P2) at (1,0)    {$\mathsf{X}$};
\node (P3) at (2,0) {$\mathsf{Y}$};
\node (P4) at (1/3,-2/3) {$2\mathsf{X}+\mathsf{Y}$};
\node (P5) at (5/3,-2/3) {$3\mathsf{X}$};
\draw[arrows={-stealth},transform canvas={yshift=2pt}] (P1) to node[above] {$\kappa_1$} (P2);
\draw[arrows={-stealth},transform canvas={yshift=-2pt}] (P2) to node[below] {$\kappa_2$} (P1);
\draw[arrows={-stealth}] (P2) to node[above] {$\kappa_3$} (P3);
\draw[arrows={-stealth}] (P4) to node[above] {$\kappa_4$} (P5);
\node at (4,-1/3) {$\begin{aligned}
\dot{x} &= \kappa_1 -\kappa_2 x - \kappa_3 x + \kappa_4 x^2y, \\
\dot{y} &= \kappa_3 x - \kappa_4 x^2y.
\end{aligned}$};
\end{tikzpicture}
\end{center}
At any fixed values of the rate constants the system has a unique positive equilibrium, $(x^*,y^*)=\left(\frac{\kappa_1}{\kappa_2},\frac{\kappa_2 \kappa_3}{\kappa_1 \kappa_4}\right)$. It can be shown that the system is permanent. Moreover, it is known that $(x^*,y^*)$ is globally asymptotically stable for $\kappa_3 \leq \kappa_2 + \frac{\kappa_1^2 \kappa_4}{\kappa_2^2}$, while it is repelling for $\kappa_3 > \kappa_2 + \frac{\kappa_1^2 \kappa_4}{\kappa_2^2}$ and is surrounded by a periodic orbit that is born via a supercritical Andronov-Hopf bifurcation. Moreover, this periodic orbit is unique and attracts every positive initial condition, except $(x^*,y^*)$ (see \cite[Example 5 on page 135]{ye:1986}). 

Let us now homogenise the Brusselator. The resulting CRN and its mass action differential equation take the form
\begin{center}
\begin{tikzpicture}[scale=1.5]
\node (P1) at (0,0)    {$\mathsf{Z}$};
\node (P2) at (1,0)    {$\mathsf{X}$};
\node (P3) at (2,0) {$\mathsf{Y}$};
\node (P4) at (1/3,-2/3) {$2\mathsf{X}+\mathsf{Y}$};
\node (P5) at (5/3,-2/3) {$3\mathsf{X}$};
\draw[arrows={-stealth},transform canvas={yshift=2pt}] (P1) to node[above] {$\kappa_1$} (P2);
\draw[arrows={-stealth},transform canvas={yshift=-2pt}] (P2) to node[below] {$\kappa_2$} (P1);
\draw[arrows={-stealth}] (P2) to node[above] {$\kappa_3$} (P3);
\draw[arrows={-stealth}] (P4) to node[above] {$\kappa_4$} (P5);
\node at (4,-1/3) {$\begin{aligned}
\dot{x} &= \kappa_1 z -\kappa_2 x - \kappa_3 x + \kappa_4 x^2y, \\
\dot{y} &= \kappa_3 x - \kappa_4 x^2y, \\
\dot{z} &= -\kappa_1 z + \kappa_2 x.
\end{aligned}$};
\end{tikzpicture}
\end{center}
By Theorem~\ref{mainthm}, there exist rate constants and a stoichiometric class where the new system has a stable periodic orbit. In fact, given any rate constants satisfying $\kappa_3 > \kappa_2 + \frac{\kappa_1^2 \kappa_4}{\kappa_2^2}$ and such that the Brusselator has a linearly stable periodic orbit, the proof of Theorem~\ref{mainthm} gives us a procedure for finding a linearly stable periodic orbit in the lifted system. Moreover, by Remark~\ref{rembif}, a supercritical Andronov-Hopf bifurcation must occur on some stoichiometric class as rate constants are varied in the lifted system.

Note, however, that the global behaviour of the homogenised Brusselator is quite different from that of the Brusselator. The first interesting difference is that whereas the original system was permanent, the lifted system is not permanent on any stoichiometric class intersecting the positive orthant; we show in \ref{sec:app_brusselator} that for all rate constants and all positive values of the parameter $c$, the stoichiometric class satisfying $x+y+z=c$ includes a boundary equilibrium $(0,c,0)$ which is asymptotically stable relative to this class. Thus we cannot control the vector field at all points on any of these stoichiometric classes, even though these classes are compact (see Remark~\ref{remlift} following the proof of Theorem~\ref{mainthm}).

Aside from the loss of permanence, the homogeneous Brusselator admits several nontrivial behaviours forbidden in the original. In \ref{sec:app_brusselator} we show that for the homogenised network both supercritical and subcritical Andronov-Hopf bifurcations can occur on some stoichiometric classes as we vary rate constants. Furthermore, various interesting co-dimension two bifurcations take place: a generic Bautin bifurcation \cite[Section 8.3]{kuznetsov:2004} and a generic Bogdanov-Takens bifurcation \cite[Section 8.4]{kuznetsov:2004} can both occur. This implies that the lifted system admits, for various choices of the rate constants, the following behaviours not seen in the original Brusselator:
\begin{itemize}
\item a fold bifurcation of equilibria;
\item an unstable positive equilibrium surrounded by a stable and an unstable periodic orbit;
\item a fold bifurcation of periodic orbits;
\item a stable positive equilibrium surrounded by an unstable periodic orbit or a homoclinic orbit;
\item a homoclinic bifurcation.
\end{itemize}
It is not unexpected that the lifted system admits richer dynamical behaviour; but it is worth noting that if we restrict attention to any stoichiometric class and consider the evolution of concentrations of $\mathsf{X}$ and $\mathsf{Y}$, then the (2-dimensional) lifted vector field differs from the original only by the addition of linear terms (see equation (\ref{eq:ode_lifted_brusselator_2d}) in \ref{sec:app_brusselator}). The addition of linear terms to a 2D differential equation can thus quite dramatically increase the complexity of its behaviour.

\section{Discussion and conclusions}

Inheritance results tell us how we might enlarge a CRN while preserving its capacity for various dynamical behaviours. The main result in this paper adds a simple but important inheritance result relevant to the study of both multistationarity and oscillation in CRNs. Some inheritance results, including the main result of this paper, are gathered in \cite{banajisplitreacs}. Taken together, these results provide a powerful tool for predicting nontrivial behaviours in a CRN based on its subnetworks.

It is useful to think of inheritance results in terms of partial orders. For example, suppose a CRN $\mathcal{R}$ with mass action kinetics admits linearly stable oscillation. The same then holds for all CRNs greater than $\mathcal{R}$ in the partial order defined by available inheritance results. We may then look for CRNs admitting linearly stable oscillation which are minimal with respect to this partial order in order to gain insight into the capacity for stable oscillation in larger and more complex CRNs \cite{banajiCRNosci}. Such a program provides a rigorous basis for claims about ``motifs'', namely small subnetworks which are at the root of certain behaviours in biological systems. Systematically identifying minimal CRNs with prescribed behaviours, followed by the development of algorithms to test for their presence in larger CRNs, is a natural avenue for future work. 

While inheritance results are often phrased in terms of enlarging CRNs, they can also be seen in terms of modelling choices. The main result here is relevant to choices which might affect conservation laws. In physically realistic systems of chemical reactions occurring in a closed environment we expect numbers of atoms of each element to be conserved: this is the so-called {\em law of atomic balance} \cite{erdi:toth:1989}. However, it is common in modelling CRNs to omit some species from reactions, particularly when they are considered to be present in abundance, or their concentration is subject to external control. As an example, when reactions involving ATP and ADP occur in biochemical models, inorganic phosphate and water are often omitted from the equations. Such omissions tend to destroy physical conservation laws. 

It is natural to worry that omitting species from reactions might introduce fundamentally new, and unrealistic, behaviours into the system. Theorems~4 in \cite{banajipanteaMPNE} and \cite{banajiCRNosci} provide some reassurance that this will not occur if we omit species whose concentration is sufficiently strongly controlled by external processes. In a similar way, Theorem~\ref{mainthm} here provides some reassurance in the case where the omitted species are linearly dependent on the others. In this case, the omissions {\em cannot} introduce the capacity for behaviours such as stable oscillation or multistability. In fact, any behaviour of the simplified CRN occurring on a compact set, and which is robust in the sense that it survives $C^1$ perturbation, can also be obtained on some stoichiometric class of the larger CRN for appropriate choices of parameters. We can, of course, still {\em lose} interesting behaviours by omitting dependent species, as illustrated by the example of the Brusselator in Section~\ref{subsec:ex_brusselator}.

The examples demonstrated both the power and limitations of the result. Crucially, we only expect {\em nondegenerate} behaviours to survive the addition of new species as illustrated by the example in Section~\ref{subsec:ex_lotka}. And the result is not global: introducing new linearly dependent species may introduce new global behaviours regardless of how small we make the perturbation parameter $\varepsilon$. In particular, we saw, in the example of the Brusselator (Section~\ref{subsec:ex_brusselator}), how the lifted system could allow positive trajectories to converge to the boundary of state space, even though the original system had a positive, globally attracting, compact set for all choices of parameters. The details are laid out in \ref{sec:app_brusselator}.

\appendix


\section{Dynamics of the homogenised Brusselator}
\label{sec:app_brusselator}

Consider the homogenised Brusselator and its associated mass action differential equation:
\begin{center}
\begin{tikzpicture}[scale=1.5]
\node (P1) at (0,0)    {$\mathsf{Z}$};
\node (P2) at (1,0)    {$\mathsf{X}$};
\node (P3) at (2,0) {$\mathsf{Y}$};
\node (P4) at (1/3,-2/3) {$2\mathsf{X}+\mathsf{Y}$};
\node (P5) at (5/3,-2/3) {$3\mathsf{X}$};
\draw[arrows={-stealth},transform canvas={yshift=2pt}] (P1) to node[above] {$\kappa_1$} (P2);
\draw[arrows={-stealth},transform canvas={yshift=-2pt}] (P2) to node[below] {$\kappa_2$} (P1);
\draw[arrows={-stealth}] (P2) to node[above] {$\kappa_3$} (P3);
\draw[arrows={-stealth}] (P4) to node[above] {$\kappa_4$} (P5);
\node at (4,-1/3) {$\begin{aligned}
\dot{x} &= \kappa_1 z -\kappa_2 x - \kappa_3 x + \kappa_4 x^2y, \\
\dot{y} &= \kappa_3 x - \kappa_4 x^2y, \\
\dot{z} &= -\kappa_1 z + \kappa_2 x.
\end{aligned}$};
\end{tikzpicture}
\end{center}
The stoichiometric classes which intersect the positive orthant are $\mathcal{P}_c = \{(x,y,z) \in \mathbb{R}^3_{\geq0} \colon x+y+z=c\}$ for $c>0$. Note that for each positive $c$, the corner $(0,c,0)$ of the triangle $\mathcal{P}_c$ is an equilibrium of this system (and this is the only boundary equilibrium in $\mathcal{P}_c$).

\subsection{The homogenised Brusselator is not permanent}

Let us now restrict the dynamics to $\mathcal{P}_c$. After elimination of $z$ by the conservation law $x+y+z=c$, we obtain the following ODE system on $\{(x,y) \in \mathbb{R}^2_{\geq 0}\,:\, x+y \leq c\}$:
\begin{align} \label{eq:ode_lifted_brusselator_2d}
\begin{split}
\dot{x} &= \kappa_1 (c-x-y) -\kappa_2 x - \kappa_3 x + \kappa_4 x^2y, \\
\dot{y} &= \kappa_3 x - \kappa_4 x^2y.
\end{split}
\end{align}
From here on, we will focus on this system, which can be seen as describing, in local coordinates, the dynamics of the original ODEs restricted to a particular stoichiometric class, parameterised by $c$. Note that parameters of the system are now the four original rate constants $\kappa_1, \kappa_2, \kappa_3, \kappa_4$, along with $c$. For the purposes of bifurcation analysis we assume, however, that $c$ is fixed.

Observe that the Jacobian matrix of (\ref{eq:ode_lifted_brusselator_2d}) at the corner equilibrium $(0,c)$, denoted by $J_0$, equals
\begin{align*}
\begin{pmatrix}
-\kappa_1-\kappa_2-\kappa_3 & -\kappa_1 \\
\kappa_3                      & 0
\end{pmatrix}.
\end{align*}
Since $\det J_0 > 0$, $\tr J_0 < 0$, and
\begin{align*}
(\tr J_0)^2 - 4\det J_0 = (\kappa_1+\kappa_2+\kappa_3)^2 - 4 \kappa_1\kappa_3 = (\kappa_1+\kappa_2-\kappa_3)^2+4\kappa_2 \kappa_3>0,
\end{align*}
both eigenvalues are real and negative. Therefore, the boundary equilibrium $(0,c)$ is asymptotically stable. In particular, for all $(\kappa_1,\kappa_2,\kappa_3,\kappa_4,c)\in\mathbb{R}^5_+$, \eqref{eq:ode_lifted_brusselator_2d} is \emph{not} permanent, and since $c>0$ was arbitrary, the same clearly holds for the homogenised Brusselator on each positive stoichiometric class. By contrast, the Brusselator was permanent for all positive values of the rate constants.

\subsection{Local bifurcation analysis}

We proceed as follows. We first parameterise the set of equilibria which simplifies many calculations. The parameter used, denoted by $t$, can replace the parameter $c$ in many calculations: along the branch of equilibria we consider, $t$ and $c$ are in one-to-one correpondence. We write down necessary conditions for fold and Andronov-Hopf bifurcations to occur, and confirm that both supercritical and subcritical Andronov-Hopf bifurcations can occur. We also confirm that two codimension-$2$ bifurcations can occur on stoichiometric classes as we vary the rate constants: a generalised Andronov-Hopf bifurcation, also known as a Bautin bifurcation; and a Bogdanov-Takens bifurcation. We can check that apart from along exceptional sets, nondegeneracy and transversality conditions hold for all the bifurcations. See \cite{balazsgithub} for full details of all the calculations.

We first note that the set of positive equilibria of the homogenised Brusselator is
\begin{align*}
E_+ = \left\{\left(t,\frac{\kappa_3}{\kappa_4}\frac{1}{t},\frac{\kappa_2}{\kappa_1}t\right) \colon t>0 \right\}.
\end{align*}
The parameters $c$ and $t$ are related via $t\frac{\kappa_1+\kappa_2}{\kappa_1}+\frac{\kappa_3}{\kappa_4 t}=c$. Defining
\[
c^*:=2\sqrt{\frac{(\kappa_1+\kappa_2)\kappa_3}{\kappa_1 \kappa_4}},
\]
we find that $|E_+\cap\mathcal{P}_c| = 0$ if $c<c^*$, while $|E_+\cap\mathcal{P}_c| = 1$ if $c=c^*$, and $|E_+\cap\mathcal{P}_c| = 2$ if $c>c^*$. The equilbrium set intersects the stoichiometic class with parameter $c^*$ when $t=t^*$ where
\[
t^*:= \sqrt{\frac{\kappa_1\kappa_3}{(\kappa_1+\kappa_2)\kappa_4}}.
\]

The Jacobian matrix of \eqref{eq:ode_lifted_brusselator_2d} along $E_+$, denoted by $J(t)$, equals
\begin{align*}
\begin{pmatrix}
\kappa_3-\kappa_1-\kappa_2 & \kappa_4 t^2-\kappa_1 \\
-\kappa_3                  & -\kappa_4 t^2
\end{pmatrix}.
\end{align*}
Its determinant and trace equal
\[
\det J(t) = (\kappa_1+\kappa_2)\kappa_4 t^2 - \kappa_1 \kappa_3 \quad \mbox{and} \quad \tr J(t) = -\kappa_4 t^2 + (\kappa_3 - \kappa_1 - \kappa_2).
\]
In the positive stoichiometric classes $\mathcal{P}_c$ with $2$ positive equilibria, the equilibrium corresponding to the value of $t < t^*$, which is closer to the corner equilibrium $(0,c)$, is a saddle for all values of the rate constants. For the purposes of local bifurcation analysis, we thus focus attention on equilibria satisfying $t \geq t^*$. As values of $t$ and $c$ are in one-to-one correspondence for $t \geq t^*$ and $c \geq c^*$, we can thus pass back and forth between parameters $c$ and $t$, and do so, sometimes without comment, in calculations to follow. 

{\bf Fold bifurcations of equilibria.} These potentially occur when $\mathrm{det}\,J(t)=0$, namely on the set $\mathrm{T}$ in parameter space where $t=t^*$:
\[
\mathrm{T}:=\{(\kappa_1, \kappa_2, \kappa_3, \kappa_4, t)\in \mathbb{R}^5_+ \colon t^2=\frac{\kappa_1\kappa_3}{(\kappa_1+\kappa_2)\kappa_4}\}\,.
\]
Equivalently, fold bifurcations potentially occur when $c=c^*$, namely along:
\[
\widetilde{\mathrm{T}}:=\{(\kappa_1, \kappa_2, \kappa_3, \kappa_4, c)\in \mathbb{R}^5_+ \colon c^2= 4\frac{(\kappa_1+\kappa_2)\kappa_3}{\kappa_1\kappa_4}\}\,.
\]
For any fixed value of $c$, $\widetilde{\mathrm{T}}$ tells us combinations of rate constants for which we expect a fold bifurcation to occur on the stoichiometric class parameterised by $c$. It is straightforward to confirm that, provided $\kappa_3\neq(\kappa_1+\kappa_2)^2/\kappa_2$, the fold bifurcation is nondegenerate, and is unfolded nondegenerately by the rate constants $\kappa_i$ (any of $\kappa_1$, $\kappa_2$, $\kappa_3$ or $\kappa_4$ serves to unfold the bifurcation nondegenerately for all rate constants). The degenerate case where $\kappa_3 = (\kappa_1+\kappa_2)^2/\kappa_2$ will be of importance later.

{\bf Andronov-Hopf bifurcations.} These potentially occur when $\mathrm{tr}\,J(t) = 0$ and $\mathrm{det}\,J(t) > 0$, namely, when
\[
t^2=\frac{\kappa_3-\kappa_1-\kappa_2}{\kappa_4}, \quad t^2>\frac{\kappa_1\kappa_3}{(\kappa_1+\kappa_2)\kappa_4}\,.
\]
Combining these conditions, Andronov-Hopf bifurcations potentially occur along
\[
\mathrm{H}:=\{(\kappa_1, \kappa_2, \kappa_3, \kappa_4, t)\in \mathbb{R}^5_+ \colon t^2=\frac{\kappa_3-\kappa_1-\kappa_2}{\kappa_4},\,\quad \kappa_2\kappa_3>(\kappa_1+\kappa_2)^2\}\,.
\]
(We may, if desired, write the bifurcation set in terms of the parameters $\kappa_i$ and $c$ instead of $\kappa_i$ and $t$.) To check nondegeneracy of the Andronov-Hopf bifurcations, we compute the first focal value, $L_1$, along $\mathrm{H}$. Defining $a = \frac{\kappa_2}{\kappa_1}$ and $b = \frac{\kappa_3}{\kappa_1}$, we obtain $L_1 = \frac{1}{t^2}\frac{P(a,b)}{Q(a,b)}$, where
\begin{align*}
P(a,b)&=b^3[2-a] - b^2[5+3a-a^2] + b[5+12a+8a^2+a^3] - [4+13a+15a^2+7a^3+a^4],\\
Q(a,b)&=(ab - (1+a)^2)^{\frac32}(b-a-2).
\end{align*}

Note that the necessary condition for Andronov-Hopf bifurcation $\kappa_2\kappa_3 > (\kappa_1+\kappa_2)^2$ is equivalent to $ab > (1+a)^2$. Since $Q(a,b)$ is positive whenever this condition is satisfied, it suffices to investigate the sign of $P(a,b)$. As $a$ and $b$ vary along $\mathrm{H}$, $P(a,b)$, and hence $L_1$, can be positive, negative or zero along the bifurcation set, corresponding to subcritical, supercritical, and degenerate Andronov-Hopf bifurcations respectively, as shown in \Cref{fig:L1}.
\begin{figure}
\centering
\includegraphics[scale=0.6]{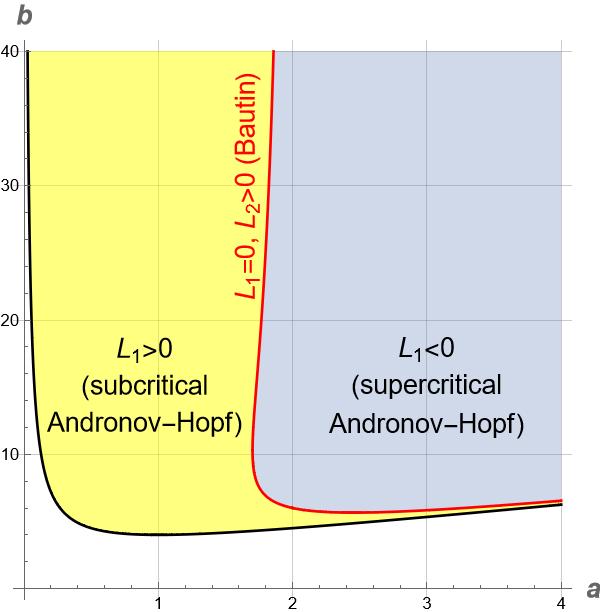}
\caption{Analysis of the Andronov-Hopf bifurcation for the mass-action system (\ref{eq:ode_lifted_brusselator_2d}) on $\mathrm{H}$. The figure shows the sign of the first focal value $L_1$ as a function of $a = \frac{\kappa_2}{\kappa_1}$ and $b = \frac{\kappa_3}{\kappa_1}$. The black curve is where $ab=(1+a)^2$, namely the boundary of $\mathrm{H}$ where the determinant of the Jacobian matrix vanishes and we expect Bogdanov-Takens bifurcations to occur; notice that it touches only the $L_1>0$ region, and we can indeed confirm that all Bogdanov-Takens bifurcations are subcritical. Along the red curve, where $L_1=0$ we can confirm that $L_2>0$. This is where we expect Bautin bifurcations to occur and, in fact, we find these bifurcations to be nondegenerate away from an exceptional set.}
\label{fig:L1}
\end{figure}

We can also confirm that wherever $L_1 \neq 0$, the parameters unfold the bifurcation nondegenerately.

{\bf Bautin bifurcations.} Generically, points along $\mathrm{H}$ where $L_1$ vanishes correspond to a generalised Andronov-Hopf bifurcation, also known as a Bautin bifurcation \cite{kuznetsov:2004}. We define the parameter set corresponding to potential Bautin bifurcations as
\[
\mathrm{GH} := \{(\kappa_1, \kappa_2, \kappa_3, \kappa_4, t) \subseteq \mathrm{H} \colon L_1 = 0\}\,.
\]
Indeed, the nondegeneracy conditions for the Bautin bifurcation can be checked and found to hold on $\mathrm{GH}$ apart from along an exceptional set:
\begin{enumerate}
\item Along $\mathrm{GH}$, we can compute the second focal value, $L_2$, and find it is always positive. The calculations are lengthy and are omitted, but can be found in the supporting documentation \cite{balazsgithub}.
\item The parameters $(\kappa_3, \kappa_4)$ unfold the bifurcation nondegenerately. Choosing these as our bifurcation parameters, then for fixed choices of the remaining parameters, we can confirm that the regularity condition (B.2) in \cite[Theorem 8.2]{kuznetsov:2004} is almost always satisfied on $\mathrm{GH}$, apart from along an exceptional set where $\kappa_2/\kappa_1=\overline{a}=\sqrt{\frac{73+22\sqrt{11}}{20}}-1\approx 1.70$. On \Cref{fig:L1}, a vertical line at $a=\overline{a}$ would touch the $L_1 = 0$ curve. The calculations are again omitted here but available in the supporting documentation \cite{balazsgithub}.
\end{enumerate}

Thus, away from the exceptional set, the Bautin bifurcation is nondegenerate. For parameter values near to $\mathrm{GH}$, on certain stoichiometric classes an unstable positive equilibrium is surrounded by a pair of periodic orbits, one stable and one unstable. As we vary rate constants, a fold bifurcation of periodic orbits must occur on some stoichiometric class. 

{\bf Bogdanov-Takens (B-T) bifurcations.} These potentially occur where $\mathrm{T}$ meets the closure of $\mathrm{H}$, namely along the set where $\mathrm{det}\,J(t)=0$ and $\mathrm{tr}\,J(t)=0$, defined by
\[
\mathrm{BT}:=\{(\kappa_1, \kappa_2, \kappa_3, \kappa_4, t)\in \mathbb{R}^5_+ \colon \kappa_2\kappa_3=(\kappa_1+\kappa_2)^2, \quad t^2=\frac{\kappa_1\kappa_3}{(\kappa_1+\kappa_2)\kappa_4}\}\,.
\]
We can confirm that the B-T bifurcation is always nondegerate, subcritical, and unfolded nondegenerately by parameters $(\kappa_3, \kappa_4)$. This involves checking conditions (BT.0), (BT.1), (BT.2), and (BT.3) in \cite[Theorem 8.4]{kuznetsov:2004}. The computations can be carried out explicitly, but are lengthy, and so are omitted here. They can be found in the supporting documentation \cite{balazsgithub}. Thus for parameter values close to $\mathrm{BT}$, on some stoichiometric classes, the system can have a stable equilibrium surrounded by an unstable periodic orbit. Furthermore, homoclinic bifurcations can occur.

In \Cref{fig:bifurcation_diagram}, we fix $\kappa_1 = 2$, $\kappa_2 = 4$, and $c = 6$, and depict the curves corresponding to $\mathrm{T}$ and $\mathrm{H}$, and the points corresponding to $\mathrm{BT}$ and $\mathrm{GH}$, in the $(\kappa_3, \kappa_4)$ plane. This completes our analysis of the local bifurcations of equilibria in the homogeneous Brusselator.
\begin{figure}
\centering
\includegraphics[scale=0.2]{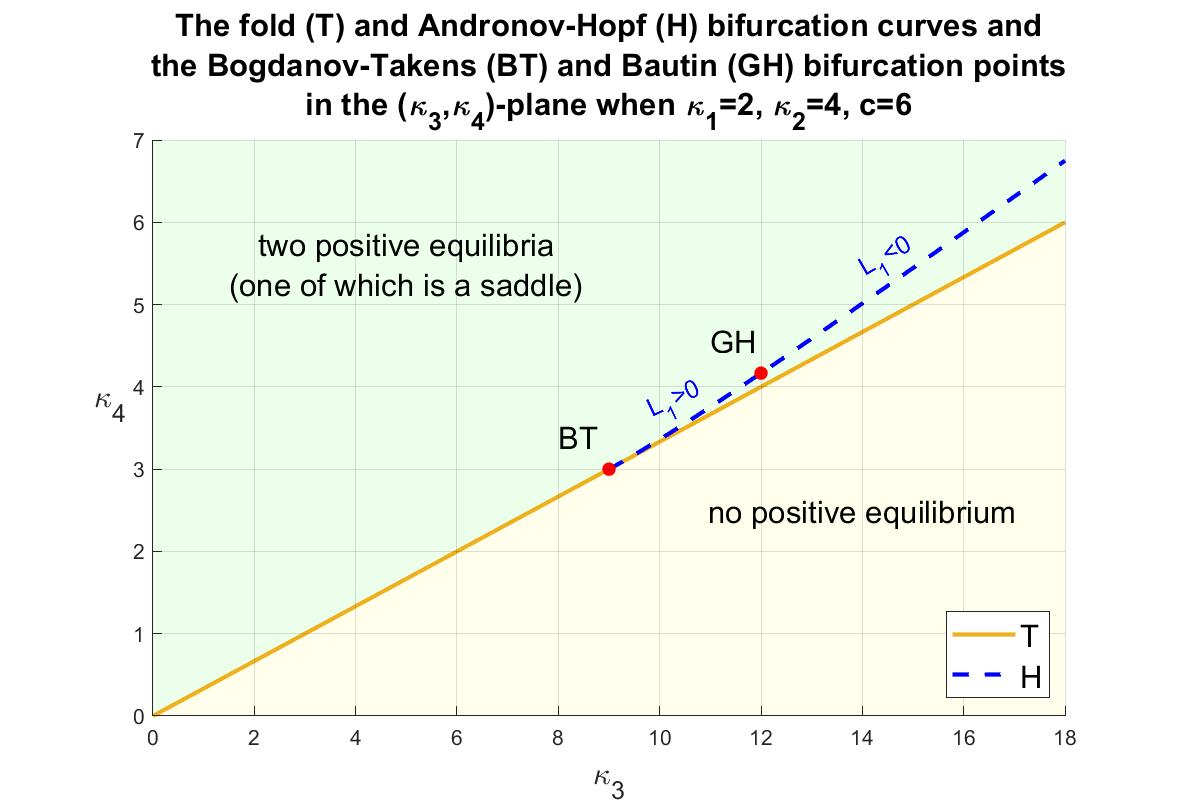}
\caption{Bifurcation diagram in the $(\kappa_3,\kappa_4)$-plane of the mass-action system (\ref{eq:ode_lifted_brusselator_2d}) with $\kappa_1$, $\kappa_2$ and $c$ fixed at $\kappa_1 = 2$, $\kappa_2 = 4$, $c=6$. On the solid ochre curve (T) a fold bifurcation of equilibria occurs. Along the dashed blue curve (H) there is an Andronov-Hopf bifurcation. A Bogdanov-Takens bifurcation occurs at (BT), where (H) touches (T). On the curve (H), the bifurcation is subcritical close to the (BT) point, but becomes supercritical beyond the point (GH), where the first focal value $L_1$ vanishes. At (GH), there is a Bautin bifurcation.}
\label{fig:bifurcation_diagram}
\end{figure}

\section*{References}

\bibliographystyle{abbrv}

\end{document}